\theoremstyle{plain}
\newtheorem{theorem}{Theorem}[section]
\newtheorem{lemma}[theorem]{Lemma}
\newtheorem{proposition}[theorem]{Proposition}
\newtheorem{corollary}[theorem]{Corollary}
\theoremstyle{definition}
\newtheorem{definition}[theorem]{Definition}
\newtheorem{remark}[theorem]{Remark}
\newtheorem{example}[theorem]{Example}
\theoremstyle{definition}
\def\fnum{equation}
\numberwithin{equation}{section}
\begin{document}
\title[Whitney sphere along  mean curvature flow]{A note on the evolution of the Whitney sphere along  mean curvature flow}

\author{Celso Viana}

\address{University College London UCL\\Union Building, 25 Gordon Street, London WC1E 6BT}

\email{celso.viana.14@ucl.ac.uk}

\begin{abstract}
We study the evolution of the Whitney sphere along the Lagrangian mean curvature flow. 
We show  that equivariant Lagrangian  spheres  in $\mathbb{C}^n$ satisfying mild geometric assumptions collapse to a point in finite time and the tangent flows converge to a Lagrangian plane with multiplicity two.
\end{abstract}
\maketitle

\section{Introduction}
 The Whitney sphere is  the immersion $F: \mathbb{S}^n \rightarrow \mathbb{R}^{2n}$ given by 
\[F(x_1,\ldots,x_{n+1})=\frac{1}{1+x_{n+1}^2}(x_1,x_1x_{n+1},\ldots,x_n,x_nx_{n+1}).\]
This immersion  is Lagrangian, i.e.,  $F^{*}\omega=0$, where $\omega$ is the standard symplectic form on $\mathbb{R}^{2n}$. From the point of view of topology, the Whitney sphere is interesting since  it has the best topological behavior, namely it fails to be embedded   only at the north and south pole where it has a transversal double point. An well known result of Gromov asserts that there are no embedded Lagrangian spheres in $\mathbb{C}^n$. On the geometry side, this immersion  can be characterized by many geometric rigidity properties, see \cite{CU,RU}. In this sense, the Whitney sphere plays the role of totally umbilical hypersurfaces in $\mathbb{R}^n$ in the class of Lagrangian submanifolds.

Another interesting aspect of the Whitney sphere is that it appears  as  a limit surface under Lagrangian mean curvature flow of some well-behaved Lagrangian submanifolds in $\mathbb{R}^4$. Recall that the mean curvature flow (MCF) of an immersion $F_0: M^k\rightarrow \mathbb{R}^m$ is a map $F: M\rightarrow [0,T]\rightarrow \mathbb{R}^m$ such that $F(x,0)=F_0$ and   satisfies the equation
\[
\frac{d}{dt} F= H,
\]
where $H$ is the mean curvature vector of $M^n$. It was shown by K. Smoczyk  that the Lagrangian condition is preserved by MCF when the ambient space is a  K\"{a}hler-Einstein manifold. 
The Lagrangian mean curvature flow gained a lot of interest recently as a potential tool to find minimal Lagrangian (special Lagrangian)  in a given homology class or Hamiltonian isotopy class of a Calabi-Yau manifold. Special Lagrangian submanifolds  have the remarkable property of being area minimizing by means of  calibration arguments. The  classical approach of minimizing area in a given class, however, does not seem very effective to find smooth special Lagrangian as shown by Schoen and Wolfson in \cite{SW}. 

Ideally, one could hope that the evolution of  well behaved Lagrangian submanifolds along mean curvature flow  to converge to  special Lagrangians. In a series of works, A. Neves  showed that finite time singularities are unavoidable in the Lagrangian mean curvature flow in general, see \cite{N,N2}. 
It is  constructed in \cite{N} a non-compact zero Maslov class Lagrangian in $\mathbb{R}^4$ with bounded Lagrangian angle and in the
same Hamiltonian isotopy class of a Lagrangian plane that nevertheless develops a singularity  in finite time. At the singular time the limit
surface pictures like a connect sum of a smooth Lagrangian (diffeomorphic to a Lagrangian plane) with a Whitney Sphere. Such construction were later generalized to $4$-dimensional Calabi-Yau manifolds, see \cite{N2}.

There are very few results regarding the evolution of compact Lagrangian submanifolds in $\mathbb{C}^n$. Motivate by this, we investigate the evolution of the Whitney sphere along mean curvature flow. Despite its  many geometric  properties, it is not a self-similar solution of the flow.  
By exploiting its rotationally symmetries,  one can reduce its  mean curvature flow to a flow   about curves in  the plane. As a particular case of our main result we prove
\newline

\noindent	\textit{Let $F: \mathbb{S}^n\times [0,T)\rightarrow \mathbb{C}^{n}$ be the  maximal existence mean curvature flow of the Whitney sphere. Then $F_T(x)=\{0\}$ for every $x\in \mathbb{S}^n$. The tangent flow at the origin is a Lagrangian plane with multiplicity two.}
\newline

A Lagrangian submanifold $L\subset\mathbb{C}^{n}$ is called \textit{equivariant} if there exists a antipodal invariant curve $\gamma: I\rightarrow \mathbb{C}$ such that $L$ can be written as
\begin{equation*}\label{equivariant1}
L=\{(\gamma(u)\, G_1(x),\ldots,
\gamma(u)\, G_n(x))\in \mathbb{C}^n\,:\, G: \mathbb{S}^{n-1}\rightarrow \mathbb{R}^{n}\},
\end{equation*}
where $G$ is a the standard embedding of $\mathbb{S}^{n-1}$ in $\mathbb{R}^n$.
Using spherical coordinates on $\mathbb{S}^n$, $(\cos(u)\,G(x), \sin(u))$, we check that the  Whitney sphere is equivariant with associated curve $\gamma_0: (0,2\pi)\rightarrow \mathbb{R}^2$  given by:
\[\gamma_0(u)=\bigg(\frac{\sin(u)}{1+\cos^2(u)},\frac{\sin(u)\cos(u)}{1+\cos^2(u)}\bigg).\]
The equivariant property is preserved by the mean curvature flow and the corresponding evolution equation for $\gamma_t$ is
\begin{equation}\label{equivariant flow}
\frac{d\gamma}{dt}=\overrightarrow{k}-(n-1)\, \frac{\gamma^{\perp}}{|\gamma|^2}.
\end{equation}
Here$\overrightarrow{k}$ denotes the curvature vector of $\gamma$, it is  defined by $\overrightarrow{k}= \frac{1}{|\gamma^{\prime}|}\frac{d}{du}\frac{\gamma^{\prime}}{|\gamma^{\prime}|}$, and  $\gamma^{\perp}$ denotes the normal projection of the position vector $\gamma$. This flow is known as the \textit{equivariant flow}.

 \begin{definition}
	Let $\mathcal{C}$ be the  set of antipodal invariant figure eight curves
	$\gamma: \mathbb{S}^1\rightarrow \mathbb{C}$ with only one   self-intersection  which is transversal and  located at the origin.
\end{definition}
\begin{definition}
Let $\Omega_{\alpha}$ be the antipodal invariant region in $\mathbb{R}^2$ bounded by two lines through the origin with  angle between them equal to  $\alpha$.
\end{definition}
\begin{figure}[h]\label{Whitney Sphere}
	\includegraphics[scale=0.13]{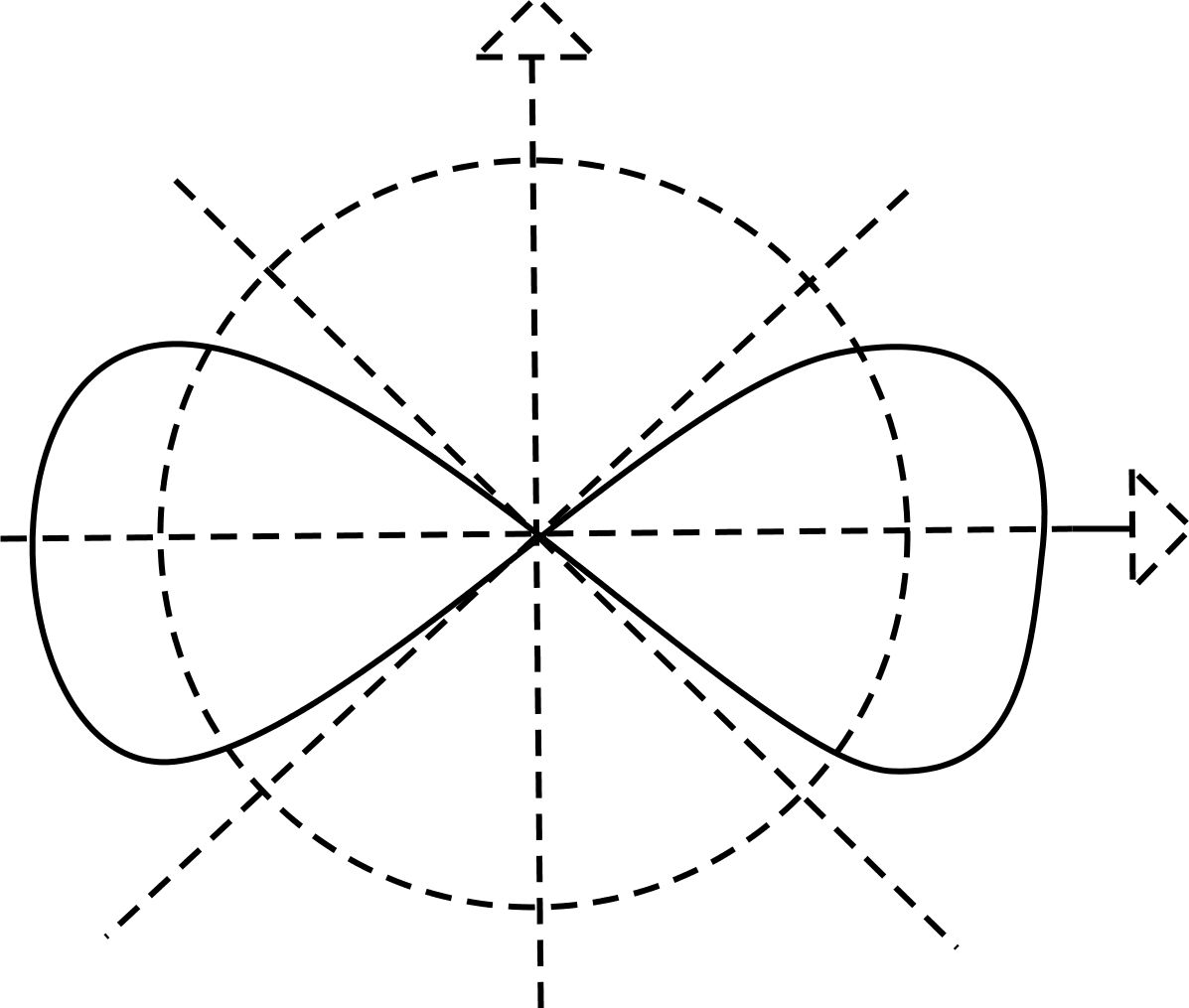}
	\caption{Whitney Sphere}
\end{figure}

\begin{theorem}\label{main theorem}
\textit{Let  $\gamma$ be a curve in $\mathcal{C}$ satisfying at least one of the following assumptions:  
	\begin{itemize}
		\item[i)] $\{\gamma\}\cap \mathbb{S}^1(R)$ has at most $4$ points for every $R>0$;
		\item[ii)]  $\{\gamma\}\subset \Omega_{\frac{\pi}{n}}$.
	\end{itemize}
	   	If $\{\gamma_t\}_{t\in [0,T)}$ is the maximal equivariant flow of $\gamma$, then $\gamma_T=\{0\}$. Moreover, the tangent flow at the origin is a line with multiplicity two.}
\end{theorem}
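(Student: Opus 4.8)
The plan is to analyse the profile curve under the equivariant flow \eqref{equivariant flow} directly: trap it between explicit barrier solutions, use an intersection-comparison principle to preserve its combinatorial type, force a finite-time collapse by an area estimate, and finally identify the singularity at the origin by parabolic rescaling together with a classification of equivariant self-shrinkers. For \emph{Step 1} (barriers and preservation of the type): the equivariant flow is a quasilinear parabolic equation for immersed planar curves, so there is a unique smooth solution $\{\gamma_t\}$ on a maximal interval $[0,T)$. Two explicit families serve as barriers: every line through the origin is \emph{stationary} (its curvature vanishes and its position vector is tangent, so $\gamma^\perp\equiv0$), and the circle of radius $R$ about the origin stays such a circle with $\tfrac{d}{dt}R^2=-2n$ (one checks $\overrightarrow k-(n-1)\gamma^\perp/|\gamma|^2=-\tfrac nR\,\hat r$), so $R(t)=(R_0^2-2nt)^{1/2}$, disappearing at $t=R_0^2/2n$. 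Using Angenent's principle that the number of intersections of two solutions of a parabolic curve flow, and the self-intersection number of a single solution, is finite and nonincreasing (dropping at tangencies), I would show: the antipodal symmetry is preserved, so $\gamma_t$ meets the origin exactly at the two fixed parameters of the involution and remains a figure eight with a single, transversal double point there (outside an at most discrete set of times); if $\{\gamma_0\}\subset\Omega_{\pi/n}$ then $\{\gamma_t\}\subset\Omega_{\pi/n}$, since $\gamma_0$ is disjoint from the two stationary boundary lines; and if $\{\gamma_0\}\cap\mathbb{S}^1(R)$ has at most $4$ points for every $R$, so does $\{\gamma_t\}$, by comparison with the shrinking circles. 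Choosing $R_0>\max_\theta|\gamma_0(\theta)|$, the barrier circle encloses $\gamma_0$, hence $\{\gamma_t\}\subset\overline{B(0,R(t))}$ for all $t$; in particular $T<\infty$ and the curve is confined to a ball shrinking to the origin.

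\emph{Step 2} (finite-time collapse onto the origin): let $D_t$ be the region enclosed by one lobe of the figure eight; it is a topological disc whose boundary has a single corner at the origin, of interior angle $\beta(t)\in(0,\pi)$. Since $\Delta\log|\gamma|=2\pi\delta_0$ on $\mathbb{R}^2$ and the origin lies on $\partial D_t$, the first variation of area along \eqref{equivariant flow}, together with Gauss--Bonnet for $D_t$, gives
\[
\frac{d}{dt}\,\Area(D_t)=-\bigl(\pi+\beta(t)\bigr)-(n-1)\beta(t)<-\pi ,
\]
so $\Area(D_t)$ vanishes in finite time; by antipodal symmetry both lobes shrink together. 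It remains to exclude any other singular behaviour at $T$. Away from the origin $|\gamma|$ is bounded below, so the lower-order term in \eqref{equivariant flow} is smooth and bounded and the flow is a uniformly parabolic perturbation of curve shortening; a curvature blow-up at a point $p\ne0$ would, after rescaling and using that $\gamma_t$ is locally embedded there, yield a properly embedded self-shrinker or translator, which by the Abresch--Langer/Grayson classification (and a localized Grayson-type argument ruling out the grim reaper) forces a small embedded loop to pinch at $p$; but every lobe passes through the origin, so such a loop could only come from a self-intersection off the origin, contradicting Step 1. Near the origin the two transversal branches of $\gamma_t$ correspond to two transversal Lagrangian planes in $\mathbb{C}^n$ meeting along $\{0\}$, a stationary configuration, so no curvature blow-up can occur there while the branches stay transversal. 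Hence the only singular behaviour at $T$ is the shrinking of the lobes, and since each lobe contains the origin, $\gamma_T=\{0\}$.

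\emph{Step 3} (the tangent flow): the confinement in $\overline{B(0,R(t))}$ with $R(t)\sim\sqrt{2n}\,(T-t)^{1/2}$ shows the collapse is of type I. Parabolically rescaling at $(0,T)$ and invoking Huisken's monotonicity formula (in $\mathbb{C}^n$ for the Lagrangian mean curvature flow, or directly for the equivariant flow with the Gaussian weight) gives subconvergence of the rescaled curves to an equivariant self-shrinker $\Sigma$, i.e. a (possibly multiple) solution of $\overrightarrow k=\bigl(\tfrac{n-1}{|\gamma|^2}-\tfrac12\bigr)\gamma^\perp$, which is antipodally symmetric, arises as a limit of curves in $\mathcal{C}$, and inherits hypothesis (i) resp. (ii). One then classifies such $\Sigma$: the shrinker equation reduces to an autonomous first-order ODE for the radius (as a function of arc length or of the polar angle) admitting a first integral of Abresch--Langer type; its closed solutions are the round circle $\mathbb{S}^1(\sqrt{2n})$, which is embedded, not a limit of figure eights, and not contained in $\Omega_{\pi/n}$, together with a discrete family of immersed figure-eight shrinkers, each of which (by the first-integral analysis) has angular width strictly larger than $\pi/n$ and meets some circle in strictly more than $4$ points. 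Thus neither (i) nor (ii) is compatible with a nontrivial $\Sigma$, leaving only the degenerate shrinker, a single line through the origin onto which the two branches of $\gamma_t$ asymptotically align; hence the tangent flow at the origin is a line with multiplicity two (a Lagrangian plane with multiplicity two upstairs).

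The heart of the argument, and the expected main obstacle, is Step 3: setting up the equivariant shrinker ODE, extracting and using its first integral to prove the sharp geometric bounds (angular width $>\pi/n$; more than four intersections with some circle) that eliminate every nontrivial figure-eight shrinker, and then upgrading ``a line through the origin, with multiplicity'' to ``a single line with multiplicity exactly two'' rather than two distinct lines, which requires a density or multiplicity refinement of the monotonicity argument. Secondary technical points are the careful bookkeeping of self-intersections and of the transversality of the double point along the flow, and the localized exclusion of translating singularity models away from the origin.
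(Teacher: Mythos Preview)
Your overall architecture is reasonable and shares several ingredients with the paper (intersection comparison, the area estimate, parabolic rescaling), but there is a genuine gap in Step~2 and your Step~3 takes a substantially harder route than necessary.

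\textbf{The gap in Step~2.} You dismiss a possible singularity at the origin with the remark that ``two transversal Lagrangian planes meeting at $\{0\}$ is a stationary configuration, so no curvature blow-up can occur there while the branches stay transversal.'' This does not follow: knowing the \emph{tangent cone} at the origin is a pair of planes says nothing, by itself, about curvature staying bounded as $t\nearrow T$. Indeed, the curve could concentrate area near the origin while the two branches remain transversal. The paper devotes its main effort exactly to this point. It argues by contradiction: assume the origin is singular but $\gamma_T\neq\{0\}$. Using Huisken's monotonicity together with the evolution $\partial_t\theta=\Delta\theta$ of the Lagrangian angle (so that $\int\theta^2\Phi$ is monotone), one shows the rescaled flows converge in $C^{1,1/2}_{\mathrm{loc}}(\mathbb{R}^2\setminus\{0\})$ to a union of straight lines, and that under hypothesis (i) each of the two branches converges to a plane with \emph{multiplicity one}. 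White's local regularity theorem then forces the origin to be a regular point, a contradiction. Under hypothesis (ii) the paper instead analyses the limit of the Lagrangian angle on the separate branches (using a primitive $\beta$ of the Liouville form and the function $\beta+2(t-t_0)\theta$) to show that the two limiting lines either coincide with the same orientation or meet at angle exactly $\pi/n$; both are excluded, the first by a mod-$2$ intersection count and the second by the strict inclusion $\gamma_t\subset\Omega_{\pi/n-\delta_t}$. None of this is captured by ``stationary configuration.'' Relatedly, your circle barrier only gives $T\le R_0^2/(2n)$; it does not by itself force $\gamma_T=\{0\}$ unless you already know the flow survives until the barrier collapses.

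\textbf{The approach in Step~3.} You propose to classify all equivariant self-shrinkers by an Abresch--Langer-type first integral and then show every nontrivial figure-eight shrinker violates (i) and (ii). This is a different, and heavier, route than the paper's. Because the Lagrangian angle satisfies the heat equation, the paper never needs to classify self-shrinkers: the monotonicity of $\int\theta^2\Phi$ forces $H\to 0$ along any blow-up sequence, so the rescaled limits are automatically unions of lines through the origin, and one only has to decide \emph{which} lines and with what multiplicity. Your ODE classification may be feasible, but the claimed sharp bounds (angular width strictly greater than $\pi/n$; more than four intersections with some circle) are not in the literature for this equation and would need a careful proof. If you want to keep your framework, the efficient fix is to import the Lagrangian-angle argument to conclude directly that tangent flows are lines, and then use the area estimate (as the paper does) together with (i) or (ii) to rule out two distinct lines and pin down multiplicity two.
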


\begin{remark}
	The assumptions in Theorem \ref{main theorem} are sharp. In Section 3 we construct for every $\alpha>\frac{\pi}{2}$ a curve $\gamma\in \mathcal{C}$ and $\{\gamma\}\subset \Omega_{\alpha}$   that  develops a non-trivial singularity along the flow (\ref{equivariant flow}) at the origin when $n=2$. 
\end{remark}

The proof of Theorem \ref{main theorem} follows closely the ideas in \cite{N,N1} where it is shown that singularities for the mean curvature flow of  monotone Lagrangian submanifolds in $\mathbb{R}^4$ are modeled on area minimizing cones.

\section{Preliminaries}

Let $L$ be a Lagrangian submanifold in $\mathbb{C}^n$. This implies that $\omega\lvert_{L}=0$, where $\omega= \sum_{i=1}^{n}\frac{\sqrt{-1}}{2}dz_i\wedge \overline{dz_i}$ is the standard symplectic form on $\mathbb{C}^n$. Let $\Omega$ be the complex valued $n$-form given by 
$$\Omega= dz_1\wedge\ldots\wedge dz_n.$$
A standard computation implies that
\begin{eqnarray}\label{calibration}
	\Omega\lvert_L= e^{i\theta}vol_L.
\end{eqnarray}
The multivalued function $\theta$ is called the \textit{Lagrangian angle} of $L$. If $\theta$ is a single valued function, then $L$ is called \textit{zero-Maslov class}. If $\theta=\theta_0$, then $L$ is calibrated by $\text{Re}(e^{-i\theta_0}\Omega)$  and hence area-minimizing. In this case, $L$ is called  \textit{special Lagrangian}. More generally, the Lagrangian angle and the geometry of $L$ are related through $\overrightarrow{H}= J(\nabla \theta)$. Recall also the Liouville one form given by 
\[
\lambda=\sum_{i=1}^{n}x_idy_i-y_idx_i.
\]
One can check that $d\lambda=\omega$. In particular, $[\lambda]\in H_1(L)$. When $[\lambda]= c[d\theta]$ for some $c\in\mathbb{R}$, then $L$ is said to be a \textit{monotone Lagrangian}.

Let $L$ be a equivariant Lagrangian submanifold in $\mathbb{R}^{2n}$. Hence, there exists a regular curve $\gamma$ in $\mathbb{R}^2$ such that 
\begin{eqnarray}\label{equivariant}
L=\{(\gamma\, G_1,\ldots,
\gamma\, G_n)\in \mathbb{R}^{2n},\, \sum_{i=1}^{n}G_i^2=1.\}
\end{eqnarray}
After choosing a parametrization of $\gamma$ we have
\begin{equation}\label{eq2}
\Omega_L:= dz_1\wedge\cdots\wedge dz_n\bigg|_{L}= e^{i\theta}\text{vol}_L=\frac{\gamma^{\prime}}{|\gamma^{\prime}|}\cdot\bigg(\frac{\gamma}{|\gamma|}\bigg)^{n-1} \text{vol}_L,
\end{equation}
where $z\cdot w$ denotes the standard multiplication of complex numbers; here we consider $\gamma$ as complex valued function. The Lagrangian angle relates to the geometry of L.

If $L_t$ is the mean curvature flow  starting at $L$, then $L_t$ shares the same rotational symmetries of $L$, i.e., 
$L_t=\{\gamma_t\cos(\alpha),
\gamma_t\sin(\alpha)), \alpha \in \mathbb{R}/2\pi \mathbb{Z}\}$. Moreover,
\begin{equation}
\frac{d\gamma}{dt}=\overrightarrow{k}-(n-1) \frac{\gamma^{\perp}}{|\gamma|^2}.
\end{equation}
Although the term $\frac{\gamma^{\perp}}{|\gamma|^2}$ is not well defined at the origin the quantity has its meaning even when
a curve goes through the origin as we can see below.

\begin{lemma}
	Let $\gamma: [-a,a]\rightarrow \mathbb{R}^2$ a smooth regular curve such that $\gamma(0)=0$. Then
	$$\lim_{s\rightarrow 0}\frac{\gamma^{\perp}}{|\gamma|^2}(s)=\frac{1}{2}\overrightarrow{k}(0).$$
\end{lemma}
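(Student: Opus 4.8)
The statement is a local computation: near $s=0$ the curve passes through the origin, so I would set up a Taylor expansion of $\gamma$ at $s=0$ and compute both sides. The natural normalization is to take $s$ to be arc length, so that $|\gamma'(s)|\equiv 1$; then $\gamma'(0)$ is a unit vector $T$ and, differentiating $|\gamma'|^2\equiv 1$, we get $\gamma'(s)\cdot\gamma''(s)\equiv 0$, so $\gamma''(0)=\kappa(0)\,N$ where $N$ is the unit normal and $\vec{k}(0)=\kappa(0)N$. Thus
\begin{equation*}
\gamma(s)=sT+\tfrac{1}{2}s^2\,\vec{k}(0)+O(s^3).
\end{equation*}

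From here the two quantities to expand are $|\gamma(s)|^2$ and $\gamma(s)^{\perp}$, the component of the position vector orthogonal to $\gamma'(s)$. For the first, $|\gamma(s)|^2 = s^2|T|^2 + s^3\,T\cdot\vec{k}(0) + O(s^4) = s^2 + O(s^4)$, since $T\cdot\vec{k}(0)=0$ and the $s^3$ coefficient vanishes; so $|\gamma(s)|^2 = s^2(1+O(s^2))$. For the numerator, $\gamma(s)^{\perp} = \gamma(s) - \big(\gamma(s)\cdot\gamma'(s)\big)\gamma'(s)$; one computes $\gamma'(s) = T + s\,\vec{k}(0) + O(s^2)$ and $\gamma(s)\cdot\gamma'(s) = s + O(s^2)$, so that $\gamma(s)^{\perp} = sT + \tfrac12 s^2\vec{k}(0) - s(T+O(s^2)) + O(s^3) = \tfrac12 s^2\,\vec{k}(0) + O(s^3)$; the linear-in-$s$ terms along $T$ cancel exactly, which is the crux. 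Dividing, $\dfrac{\gamma^{\perp}}{|\gamma|^2}(s) = \dfrac{\tfrac12 s^2\vec{k}(0)+O(s^3)}{s^2+O(s^4)} \longrightarrow \tfrac12\,\vec{k}(0)$ as $s\to 0$.

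The one point requiring a little care — and the only real obstacle — is bookkeeping the orders in the numerator correctly: both the $O(s)$ term (which must cancel) and the $O(s^2)$ term come from combining the expansion of $\gamma(s)$ with that of $\gamma'(s)$ and of the inner product $\gamma(s)\cdot\gamma'(s)$, so one must carry each expansion to one order beyond the naive one. Since we assumed $\gamma$ is smooth (hence $C^3$ suffices), the $O(s^3)$ remainder in the numerator and $O(s^4)$ in the denominator are legitimate, and the quotient converges. Finally, if $\gamma$ is not parametrized by arc length the statement is unaffected, as both $\gamma^{\perp}/|\gamma|^2$ and $\vec{k}$ are reparametrization-invariant geometric quantities; so it is no loss to have taken $s$ to be arc length at the outset. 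This completes the proof.
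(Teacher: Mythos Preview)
Your arc-length Taylor expansion is a perfectly good route (the paper instead keeps a general parametrization and applies L'H\^opital), but there is a bookkeeping slip exactly at the step you flag as delicate. In writing the subtracted term as $s\bigl(T+O(s^2)\bigr)+O(s^3)$ you have effectively replaced $\gamma'(s)$ by $T+O(s^2)$; however $\gamma'(s)=T+s\,\vec k(0)+O(s^2)$, and the cross term $s\cdot s\,\vec k(0)=s^{2}\vec k(0)$ is of exactly the order you are trying to isolate. Carrying it along,
\[
(\gamma\cdot\gamma')\,\gamma'=\bigl(s+O(s^3)\bigr)\bigl(T+s\,\vec k(0)+O(s^2)\bigr)=sT+s^{2}\vec k(0)+O(s^3),
\]
hence
\[
\gamma^{\perp}=\Bigl(sT+\tfrac12\,s^{2}\vec k(0)\Bigr)-\Bigl(sT+s^{2}\vec k(0)\Bigr)+O(s^3)=-\tfrac12\,s^{2}\vec k(0)+O(s^3),
\]
and the limit comes out as $-\tfrac12\,\vec k(0)$, not $+\tfrac12\,\vec k(0)$. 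A direct check on the parabola $\gamma(s)=(s,s^{2})$ confirms this sign.

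In fact the lemma as printed carries this sign error, and the paper's own proof has the matching slip: the displayed equality that inserts $-s\gamma'(0)$ inside the inner product would require $\langle\gamma'(0),\,i\gamma'(s)\rangle=0$, which fails for $s\ne 0$; the discarded piece contributes exactly $-\vec k(0)$ to the limit and flips the sign. This is harmless for the paper's application, since the antipodal symmetry $\gamma(-s)=-\gamma(s)$ forces $\gamma''(0)=0$ and hence $\vec k(0)=0$, so both sides of the lemma vanish there.
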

\begin{proof}
	Let us write the left hand side as
	\begin{eqnarray*}\frac{\gamma^{\perp}}{|\gamma|^2}(s)= 
		\frac{1}{|\gamma|^2}\left< \gamma,
	i\frac{\gamma^{\prime}}{|\gamma^{\prime}|}\right> i\frac{\gamma^{\prime}}{|\gamma^{\prime}|}=  \frac{s^2}{|\gamma|^2}\left< \frac{\gamma - s\gamma^{\prime}(0)}{s^2}, i\frac{\gamma^{\prime}}{|\gamma^{\prime}|}\right> i\frac{\gamma^{\prime}}{|\gamma^{\prime}|}.
	\end{eqnarray*}
Using that $\lim_{s \rightarrow 0}
	\frac{\gamma(s)}{s}=\gamma^{\prime}(0)$ and applying the L'Hopital's rule twice, we obtain
		\begin{eqnarray*}
\lim_{s\rightarrow 0}\frac{\gamma^{\perp}}{|\gamma|^2}(s)=  \frac{1}{2}\frac{1}{|\gamma^{\prime}(0)|^2}\left< \gamma^{\prime\prime}(0),i\frac{\gamma^{\prime}(0)}{|\gamma^{\prime}|(0)}\right> i\frac{\gamma^{\prime}(0)}{|\gamma^{\prime}(0)|}= \frac{1}{2}\overrightarrow{k}(0).
	\end{eqnarray*}  
\end{proof}

\begin{proposition}[Neves \cite{N2}]\label{non-avoidance equivariant flow}
Let $\gamma_{i,t}:[-a,a]\rightarrow \mathbb{R}^2$,  $i=1,2$ and $0\leq t \leq T$, smooth regular curves satisfying
\begin{enumerate}
  \item $\gamma_{i,t}(-s)=-\gamma_{i,t}(s)$ for all $0\leq t \leq T$ and for every $s \in [-a,a]$ .
  \item The curve $\gamma_{i,t}$, $i=1,2$, solves the equation
$$\frac{d\gamma}{dt}=\overrightarrow{k}-(n-1)\frac{\gamma^{\perp}}{|\gamma|^2}.$$
  \item $\gamma_{1,0}\cap \gamma_{2,0}=\{0\}$ (non-tangential intersection) and
$\partial \gamma_{1,t}\cap \gamma_{2,t}=\partial \gamma_{2,t}\cap \gamma_{1,t}=\emptyset$ for all $t$.
\end{enumerate}
Then for all $0\leq t\leq T$ we have $\gamma_{1,t}\cap\gamma_{2,t}=\{0\}$.
\end{proposition}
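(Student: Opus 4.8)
\textbf{Proof proposal for Proposition \ref{non-avoidance equivariant flow}.}

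The plan is to reduce the statement to a maximum principle for the distance between two disjoint curves evolving by a parabolic flow, while carefully handling the origin, where the singular term $(n-1)\gamma^{\perp}/|\gamma|^2$ would naively blow up. First I would lift the equivariant flow back upstairs: the equation for $\gamma$ is, up to a tangential diffeomorphism, precisely the mean curvature flow of the associated equivariant Lagrangian submanifolds $L_{i,t}\subset\mathbb{R}^{2n}$ given by \eqref{equivariant}, with the antipodal identification $\gamma(-s)=-\gamma(s)$ of hypothesis (1) ensuring that the two arcs near the origin glue into one smooth submanifold through the double point (here the previous Lemma, which computes the limit of the singular term to be $\tfrac12\overrightarrow{k}(0)$, is exactly what guarantees that $L_{i,t}$ is a smooth MCF across $s=0$). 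Thus we have two smooth mean curvature flows $L_{1,t}$, $L_{2,t}$ in $\mathbb{R}^{2n}$ which, by hypothesis (3), intersect at time $0$ only along the orbit of the origin, and whose boundaries stay away from each other for all $t$.

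Next I would invoke the standard avoidance principle for mean curvature flow: two compact (or properly embedded, with boundaries kept disjoint) MCF's that are disjoint at time $0$ remain disjoint, because $t\mapsto \dist(L_{1,t},L_{2,t})$ is non-decreasing as long as the minimizing pair of points stays in the interior --- this is the usual comparison argument applied to the function $d(x,y,t)=|F_1(x,t)-F_2(y,t)|^2$ on $L_1\times L_2$, whose spatial Hessian at an interior minimum forces $\partial_t d\ge 0$. The subtlety is that here $L_{1,0}$ and $L_{2,0}$ are \emph{not} disjoint: they meet along the single orbit of $0$, and the intersection is transversal by hypothesis (3). So the argument must be localized: away from a small neighborhood of the origin-orbit the two flows start at positive distance and hence, by avoidance, stay at positive distance on any compact time subinterval; near the origin one argues directly that no \emph{new} intersection can be created there.

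The heart of the matter --- and the step I expect to be the main obstacle --- is controlling the behavior near the origin, i.e.\ showing the transversal intersection at $0$ cannot spawn a curve of intersection points moving away from $0$. Here I would exploit the antipodal symmetry together with the explicit structure of the singular term. Working in the curve picture, note that $0$ is a fixed point of the flow for each $\gamma_{i,t}$ (by antipodal symmetry $\gamma_{i,t}(0)=0$ for all $t$), so the only danger is that $\gamma_{1,t}$ and $\gamma_{2,t}$ develop an intersection point $p(t)\ne 0$ with $p(t)\to 0$ as $t\to t_0$. Suppose $t_0$ is the first such time; then at $t_0$ the curves are tangent at some point $q\ne 0$ (a newly created intersection away from the transversal one at the origin must be non-transversal), or $q=0$ and the two transversal branches have ``rotated into'' each other. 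In the first case one applies the interior avoidance argument on the compact region $\{|\gamma|\ge |q|/2\}$ on $[0,t_0]$ to get a contradiction with first-time tangency, exactly as in the standard proof. In the second case one blows up at the origin: rescaling parabolically, the singular term $(n-1)\gamma^{\perp}/|\gamma|^2$ is scale-invariant, the rescaled curves converge to a pair of antipodal-invariant straight lines through the origin (the tangent lines of $\gamma_{1,0}$ and $\gamma_{2,0}$ at $0$), which are distinct by transversality, and two distinct lines through the origin meet only at $0$ --- contradicting the assumption that interior intersection points accumulate at $0$. Finally, letting $T$ be arbitrary (the argument works on $[0,T]$ for each finite $T$) and noting $0\in\gamma_{1,t}\cap\gamma_{2,t}$ always, we conclude $\gamma_{1,t}\cap\gamma_{2,t}=\{0\}$ for all $t$. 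The one genuinely delicate point to get right is the parabolic-rescaling step at the origin: one must check that the rescaled flows converge in $C^1_{loc}$ on a punctured neighborhood and that the singular term passes to the limit, which is where hypothesis (3)'s transversality and the Lemma above (identifying the $s\to 0$ value of the singular term) do the real work.
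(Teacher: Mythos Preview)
Your proposal has two genuine gaps.

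First, the lift to mean curvature flow in $\mathbb{R}^{2n}$ lands you in codimension $n\ge 2$, where the avoidance principle simply fails: the argument that $\partial_t d\ge 0$ at a distance-realizing pair relies on the two tangent spaces being parallel hyperplanes, which is forced only in codimension one. Two higher-codimension mean curvature flows can and do cross, so the upstairs picture gives you nothing for free.

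Second, and more to the point, your blow-up treatment of Case B is circular. Parabolic rescaling at the spacetime point $(0,t_0)$ produces the tangent lines of $\gamma_{i,t_0}$ at the origin, \emph{not} those of $\gamma_{i,0}$; in Case B you are precisely assuming these have rotated into one another, so the blow-up yields a single line with multiplicity two and no contradiction. The whole content of the proposition is that the tangent directions at the origin cannot merge, and this needs a real PDE argument, not a rescaling.

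The paper's proof is quite different and works entirely at the curve level. Near the origin each $\gamma_{i,t}$ is written as a graph $(s,f_{i,t}(s))$; antipodal symmetry forces $f_{i,t}$ to be odd, so the slope function $\alpha_{i,t}(s)=f_{i,t}(s)/s$ extends smoothly across $s=0$ with value $f_{i,t}'(0)$. One computes that the graph equation for $f$ becomes a scalar parabolic equation for $\alpha$, and hence the difference $u=\alpha_{1}-\alpha_{2}$ satisfies
\[
\partial_t u = C_1^2\,u'' + C_2\,u' + C_3\,u + C_4^2\,\frac{u'}{s}
\]
with bounded smooth coefficients. Since $u$ is even, the singular term $u'/s$ limits to $u''(0)$ at $s=0$, and a direct first-time-of-contact argument (touching from above by $u\,e^{-Ct}+\varepsilon(t-T_1)$) shows $u>0$ persists. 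Note that $u(0,t)>0$ is exactly the statement that the tangent directions at the origin stay apart, and $u>0$ on the whole interval rules out new intersections nearby --- so both of your cases are handled at once by this single scalar maximum principle.
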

\begin{proof}
It suffices to restrict to what happens near the origin since the proposition follows
from the standard maximum principle applied to the first time of tangential intersection. 

First notice that $\gamma_{i,t}$  can be written as a graph on $[-\delta,\delta]$ for some $\delta>0$. Hence,
$\gamma_{i,t}(s)=(s,f_{i,t}(s))$ and we define $\alpha_{i,t}(s)=\frac{f_{i,t}(s)}{s}$. Let's check that $\alpha_{i,t}(s)$ is smooth: if $s\neq 0$, then
\begin{eqnarray}\label{derivadas}
\alpha'(s)= \frac{f's-f}{s^2}\quad \text{and}\quad \alpha''(s)&=&\frac{(f''s+f'-f')s^2-(f's-f)2s}{s^4} \nonumber\\ &=&\frac{f''}{s} + 2\frac{f-f's}{s^3}.
\end{eqnarray}
Since $f(0)=0$ and $f''(0)=0$ (item (1)), we can apply L'Hopital's rule to show that $\alpha'$ and $\alpha''$ in (\ref{derivadas}) have a limit when $s\rightarrow 0$. Hence, $\alpha$ is twice differentiable.

Finally we consider the function $u_t(s)=\alpha_{1,t}-\alpha_{2,t}$. Notice that  $u_0>0 $ by assumption
$(3)$  and $u_t(s)=u_t(-s)$.  Recall that  in the case of a graph $\gamma(s)=(s,f(s))$ we have
\[\gamma^{\prime}=(1,f^{\prime}), \quad \nu=\frac{(f^{\prime},-1)}{\sqrt{1+(f^{\prime})^2}}\quad \textrm{and}\quad
\overrightarrow{k}=-\frac{f^{\prime \prime}}{(1+(f^{\prime})^2)^{\frac{3}{2}}}\nu.
\]
Besides, \[\frac{\gamma^{\perp}}{|\gamma|^2}=\frac{sf^{\prime}-f}{s^2+f^2}\frac{1}{\sqrt{1+(f^{\prime})^2}}\nu.\]
Therefore, the equation $\frac{d\gamma}{dt}^{\perp}=\overrightarrow{k}-\frac{z^{\perp}}{|z|^2}$ implies
$$\frac{df}{dt}=\frac{f^{\prime \prime}}{1+(f^{\prime})^2}+(n-1)\big(\arctan \frac{f}{s}\big)^{\prime}.$$
Standard  computations imply that $\alpha_{i,t}=\frac{f_{i,t}}{s}$ satisfies
\[\frac{d\alpha_{i,t}}{dt}=\frac{\alpha_{i,t}^{\prime \prime}}{1+(s\alpha_{i,t}^{\prime}+\alpha_{i,t})^2}
+ \frac{\alpha_{i,t}^{\prime}}{s}\frac{2}{1+(s\alpha_{i,t}^{\prime}+\alpha_{i,t})^2}
+ (n-1)\frac{\alpha_{i,t}^{\prime}}{s}\frac{1}{1+\alpha_{i,t}^2}.
\]
Now we proceed to find the equation
for $\frac{du_t}{dt}$. Using that  $\frac{\alpha_{i,t}}{s}$ is also smooth, one can  checked that
$$\frac{du_t}{dt}=C_1^2 u_t^{\prime \prime}+C_2 u_t^{\prime} + C_3 u_t +C_4^2 \frac{u_t^\prime}{s},$$
where each  $C_k$ is a smooth and bounded function.
By item (3), the function $u_{t=0}$ is strictly positive since $\gamma_1$ and $\gamma_2$ have a non-tangential intersection at the origin. 

 Suppose $T_1$ is the first time where $u_t$ has a zero  say at $s_0$. Hence, $s_0$ is a minimum point as $u_{T_1}\geq0$. We consider the function
$v_t=u_t e^{-Ct}+\varepsilon(t-T_1)$ where $C$ is very large and $\varepsilon$ is a very small positive number. So at $(s_0,T_1)$ we have
\[0\geq \frac{dv_t}{dt}(s_0,T_1)=\frac{du_t}{dt}(s_0,T_1)e^{-CT_1}+\varepsilon\geq \varepsilon + C_4^2 \frac{u_t^{\prime}(s_0)}{s_0}e^{-CT_1}.\]
We  used in the equality part that $u_{T_1}(s_)=0$ and that $u_{T_1}^{\prime}(s_0)=0$ and $u_{T_1}^{\prime \prime}(s_)\leq 0$ since $s_0$ is a minimum point for $u_{T_1}$.
If $s_0\neq 0$ then the second term in the right hand side is zero and we get a contradiction. If
$s_0=0$ then  that term is just $u_t^{\prime \prime}(0)e^{-CT_1}$ by the L'Hopital's rule, hence,  non-negative and we obtain
a contradiction again.
\end{proof}

\begin{corollary}\label{preserved C}
	\textit{The set $\mathcal{C}$ is preserved by the equivariant flow.}
\end{corollary}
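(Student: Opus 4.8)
The plan is to check that the three defining properties of a curve in $\mathcal{C}$ — antipodal invariance, the existence of a self-intersection, and the fact that this self-intersection is unique, transversal, and located at the origin — are each preserved for as long as the maximal equivariant flow remains a smooth immersion. I would carry this out by a sequence of first-time arguments, using symmetry and uniqueness for the first property and the maximum principle (in the guise of Proposition \ref{non-avoidance equivariant flow}) for the others.

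\emph{Step 1: antipodal invariance and passing through the origin.} If $\gamma_t$ solves (\ref{equivariant flow}), set $\tilde\gamma_t(u):=-\gamma_t(-u)$. The curvature vector $\overrightarrow{k}$ and the quantity $\gamma^\perp/|\gamma|^2$ are reparametrization invariant and equivariant under the point reflection $z\mapsto-z$ of $\mathbb{R}^2$, so $\tilde\gamma_t$ again solves (\ref{equivariant flow}); since by the Lemma above the right-hand side extends continuously across the origin, the initial value problem has a unique solution (alternatively, pass to the ambient equivariant mean curvature flow of the associated Lagrangian and invoke uniqueness there), and $\tilde\gamma_0=\gamma_0$ forces $\tilde\gamma_t=\gamma_t$, i.e. $\gamma_t(-u)=-\gamma_t(u)$ for all $t$. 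Evaluating at the two fixed points $u=0$ and $u=\pi$ of the involution $u\mapsto-u$ on $\mathbb{S}^1$ yields $\gamma_t(0)=\gamma_t(\pi)=0$, so $\gamma_t$ always passes through the origin, and it remains a regular curve because the flow is a smooth immersion on its maximal interval.

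\emph{Step 2: confining the self-intersection to the origin and keeping it transversal.} Let $T_1$ be the first time at which $\gamma_{T_1}\notin\mathcal{C}$. For small $a$ the restrictions $\gamma_{1,t}:=\gamma_t|_{[-a,a]}$ and (after recentering) $\gamma_{2,t}:=\gamma_t|_{[\pi-a,\pi+a]}$ are antipodal-invariant solutions of (\ref{equivariant flow}) through the origin with $\gamma_{1,0}\cap\gamma_{2,0}=\{0\}$ and endpoints a positive distance apart; for $t<T_1$ the endpoint condition of Proposition \ref{non-avoidance equivariant flow} persists (a boundary point of one branch meeting the other would be a self-intersection of $\gamma_t$ away from the origin, hence would already exclude $\gamma_t$ from $\mathcal{C}$), so that proposition gives $\gamma_{1,t}\cap\gamma_{2,t}=\{0\}$ for $t\le T_1$: no further branch of $\gamma_t$ reaches the origin and the two branches through it meet only there. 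Transversality is then maintained by the strong maximum principle applied to the difference $u_t=\alpha_{1,t}-\alpha_{2,t}$ of the rescaled graph functions, exactly as in the proof of Proposition \ref{non-avoidance equivariant flow}: were transversality first lost at $(0,T_1)$, then $u_{T_1}(0)=0$ would be an interior minimum with $u_{T_1}'(0)=0$, contradicting the Hopf lemma.

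The step I expect to be the main obstacle is ruling out a self-intersection of $\gamma_t$ appearing away from the origin at time $T_1$; this is also what is needed to justify the endpoint-separation input used above. At such a first time the new intersection must be tangential, and by antipodal symmetry it appears simultaneously at a point $p$ and at $-p$; away from the origin equation (\ref{equivariant flow}) is a uniformly parabolic quasilinear equation for the curve written locally as a graph, so the Sturmian/strong-maximum-principle comparison argument of \cite{N1} forbids a new tangency from forming between two pieces of a curve that was embedded away from the origin for $t<T_1$. Assembling these first-time arguments — behavior at the origin controlled by Proposition \ref{non-avoidance equivariant flow} and transversality, behavior away from it by the parabolic maximum principle — shows that no such $T_1$ exists, hence $\gamma_t\in\mathcal{C}$ for every $t$ in the maximal interval of existence.
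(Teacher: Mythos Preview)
Your argument is essentially the same as the paper's: antipodal invariance by uniqueness of the flow, control of the intersection at the origin via Proposition~\ref{non-avoidance equivariant flow}, and exclusion of new self-intersections away from the origin by a parabolic Sturmian/maximum-principle argument. Two small remarks. First, the reference you want for the ``no new tangencies'' step away from the origin is Angenent~\cite{A}, not~\cite{N1}; the paper itself leans on~\cite{A} (Propositions~1.2 and Theorem~1.3 there) for exactly this kind of intersection control. Second, the paper's proof of the corollary actually packages more than the bare statement: after showing $\gamma_t\in\mathcal{C}$ it goes on, using Proposition~\ref{non-avoidance equivariant flow} against lines through the origin together with Angenent's intersection-number results, to show that each of the extra hypotheses~(i) and~(ii) of Theorem~\ref{main theorem} is separately preserved. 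You have not addressed this, and strictly speaking the corollary as stated does not require it, but be aware that the later proof of the theorem uses these facts.
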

\begin{proof}
	The symmetries of the curve $\gamma$ are preserved by the equivariant flow, hence $\gamma_t$ is also antipodal invariant.    Proposition \ref{non-avoidance equivariant flow} guarantees that the only self intersection of $\gamma_t$ is at the origin.
	Moreover, Proposition \ref{non-avoidance equivariant flow} also implies that $\gamma_t(s)$ non trivially intersect the  line $s\overrightarrow{v}_{s\in \mathbb{R}}$  only in at most one pair of antipodal points   for any $t \in [0,T)$.  On the other hand, by Proposition 1.2 in \cite{A},  this intersection is never tangential unless it is trivial, i.e., is at the origin.  Therefore, if $\gamma\in \Omega_{\frac{\pi}{2}}$, then so is $\gamma_t$. Finally, by Theorem 1.3 in \cite{A}, the number of intersections between $\gamma$ and $\mathbb{S}^1(R)$ is non-increasing along flow.
\end{proof}

\begin{lemma}\label{strictly contained cone}
	If $\gamma \in \Omega_{\frac{\pi}{n}}$, then for every $t>0$ there exists $\delta_t>0$ such that $\{\gamma_t\}\subset\Omega_{\frac{\pi}{n}-\delta_t}$. \end{lemma}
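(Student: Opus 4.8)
The plan is to fix $t>0$ and establish two facts: first, that for $t>0$ the curve $\gamma_t$ meets $\partial\Omega_{\pi/n}$ only at the origin, and second, that the tangent line of $\gamma_t$ at the origin is strictly interior to $\Omega_{\pi/n}$. Granting these, a compactness argument produces $\delta_t>0$. The starting observation is that a line through the origin is a static solution of the equivariant flow (\ref{equivariant flow}): for $\gamma(s)=s\,\vec v$ one has $\overrightarrow k=0$ and $\gamma^{\perp}=0$. Let $\ell_1,\ell_2$ denote the two boundary lines of $\Omega_{\pi/n}$.

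For the first fact, recall from the proof of Corollary \ref{preserved C} that, by Proposition 1.2 in \cite{A}, any intersection of $\gamma_t$ with a line through the origin is either trivial (at $0$) or transversal; a transversal intersection is a crossing, which is incompatible with $\gamma_t$ remaining inside $\Omega_{\pi/n}$ (this inclusion itself follows, as in Corollary \ref{preserved C}, since $\partial\Omega_{\pi/n}=\ell_1\cup\ell_2$ and $\gamma_t$ cannot cross out). Hence $\{\gamma_t\}\cap(\ell_1\cup\ell_2)=\{0\}$ for $t>0$. By Corollary \ref{preserved C} we also have $\gamma_t\in\mathcal C$, so $\gamma_t$ passes through the origin exactly once and transversally; since $\gamma_t$ is antipodal invariant, near the origin it is a single regular $C^1$ arc through $0$ with a well-defined tangent line $\tau_t$, and (the half-angle $\tfrac{\pi}{2n}\le\tfrac\pi4$ being less than $\tfrac\pi2$) it is the graph $(s,f_t(s))$ of an odd function over the axis of $\Omega_{\pi/n}$ on some interval $[-\delta,\delta]$.

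The main step is to show $\tau_t$ is strictly interior to $\Omega_{\pi/n}$ for $t>0$. Set $c=\tan\frac{\pi}{2n}$ and $\alpha_t(s)=f_t(s)/s$, so that $\tau_t$ has slope $\alpha_t(0)=f_t'(0)$, and $\gamma_t\subset\Omega_{\pi/n}$ forces $|\alpha_t|\le c$ on $[-\delta,\delta]$. Exactly as in the proof of Proposition \ref{non-avoidance equivariant flow}, $\alpha_t$ solves a uniformly parabolic equation on $[-\delta,\delta]$ for which the constants $c$ and $-c$ (corresponding to $\ell_1$ and $\ell_2$) are static solutions; hence $v_t^{\pm}:=c\mp\alpha_t\ge 0$ satisfies a linear equation $\frac{dv^{\pm}}{dt}=C_1^2 (v^{\pm})''+C_2 (v^{\pm})'+C_3 v^{\pm}+C_4^2\frac{(v^{\pm})'}{s}$ with smooth bounded coefficients and $C_1^2>0$, and $v_t^{\pm}$ is even in $s$, so the apparent singularity at $s=0$ is harmless (as in that proof, the last term equals $C_4^2(v^{\pm})''$ at $s=0$; equivalently one may work on $[0,\delta]$ with a Neumann condition at $0$). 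If $v_{t_1}^{+}(0)=0$ for some $t_1>0$, then $(0,t_1)$ is an interior minimum of $v^{+}$ on $[-\delta,\delta]\times[0,t_1]$, so by the strong maximum principle $v^{+}\equiv 0$ on $[-\delta,\delta]\times[0,t_1]$; this means $f_0(s)=cs$ near the origin, i.e. $\gamma_0$ coincides with a segment of $\ell_1$ there, contradicting transversality of the self-intersection of $\gamma_0\in\mathcal C$ at the origin. Hence $\alpha_t(0)<c$ for all $t>0$, and symmetrically $\alpha_t(0)>-c$, so $\tau_t\subset\mathrm{int}\,\Omega_{\pi/n}$.

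To conclude, fix $t>0$ and let $M_t=\sup\{\,\sphericalangle(p,\text{axis})\ :\ p\in\{\gamma_t\},\ p\ne 0\,\}$, the supremum of the angular distance to the axis of $\Omega_{\pi/n}$; then $\{\gamma_t\}\subset\Omega_{\pi/n-\delta}$ for some $\delta>0$ iff $M_t<\tfrac{\pi}{2n}$. Take $p_k\in\{\gamma_t\}\setminus\{0\}$ with $\sphericalangle(p_k,\text{axis})\to M_t$. If a subsequence stays at distance $\ge\rho>0$ from the origin, it subconverges to some $p_\infty\in\{\gamma_t\}$, $p_\infty\ne0$, with $\sphericalangle(p_\infty,\text{axis})=M_t$; by the first fact $p_\infty\notin\ell_1\cup\ell_2$, so $M_t<\tfrac{\pi}{2n}$. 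Otherwise $p_k\to 0$ along $\gamma_t$, so $p_k/|p_k|$ subconverges to a unit tangent vector of the arc through $0$, giving $M_t=\sphericalangle(\tau_t,\text{axis})<\tfrac{\pi}{2n}$ by the main step. In either case $M_t<\tfrac{\pi}{2n}$, which yields $\delta_t>0$ with $\{\gamma_t\}\subset\Omega_{\pi/n-\delta_t}$.

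The step I expect to be the main obstacle is the behaviour at the origin: the curve $\gamma_t$ always contains the vertex $0\in\partial\Omega_{\pi/n}$, so strict containment is genuinely a statement about the tangent line $\tau_t$, and proving $\tau_t$ opens into the interior requires the strong maximum principle applied at the singular point $s=0$ of the parabolic equation for $\alpha_t$ — handled via the antipodal symmetry, which forces $v_t^{\pm}$ to be even — together with the observation that the only way the principle can fail is ruled out by transversality of the figure-eight self-intersection.
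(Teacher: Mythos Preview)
Your approach is genuinely different from the paper's. The paper works on the Lagrangian $L_t\subset\mathbb{C}^n$ rather than on the curve: it observes that the Lagrangian angle $\theta$ satisfies the honest heat equation $\tfrac{d}{dt}\theta=\Delta\theta$, that the two preimages $z_0,-z_0$ of the origin are critical points of $\theta$ (since $\overrightarrow H=J\nabla\theta$ and $\overrightarrow H$ vanishes there by Lemma~2.1), in fact a local maximum and a local minimum, and then invokes the strong maximum principle for $\theta$ directly. Because $\theta$ is smooth on a smooth manifold, there is no singular coefficient to worry about; the opening of the cone is encoded in $\theta(z_0)$ via $e^{i\theta(z_0)}=e^{in\psi}$ where $\psi$ is the slope of the branch at the origin. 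Your route stays at the level of the planar curve and re-uses the $\alpha=f/s$ calculus from Proposition~\ref{non-avoidance equivariant flow}; this is more elementary and avoids the Lagrangian-angle machinery, at the price of having to justify the strong maximum principle for an equation with the singular first-order term $C_4^2\,v'/s$ at $s=0$ (your remark that evenness makes this ``harmless'' is morally right---it is the radial heat operator in disguise---but is not literally covered by the standard statement and deserves a line of justification).

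There are two concrete gaps. First, near the origin the figure-eight $\gamma_t\in\mathcal C$ has \emph{two} antipodal-invariant arcs through $0$, not one: the parameter values $z_0$ and $-z_0$ give two distinct branches with two (generally different) tangent lines. Your ``single regular $C^1$ arc'' description is incorrect; you must run the $\alpha$-argument for each branch separately. Second---and this is the real error---your contradiction does not land. If the strong maximum principle forces one branch of $\gamma_0$ to coincide with a segment of $\ell_1$ near $0$, the \emph{other} branch may still meet it transversally, so ``transversality of the self-intersection of $\gamma_0$'' is not violated. The correct contradiction is with your own first fact: $v^+\equiv 0$ on $[-\delta,\delta]\times[0,t_1]$ forces the branch to lie on $\ell_1$ at time $t_1/2>0$, whereas you have already shown $\{\gamma_t\}\cap(\ell_1\cup\ell_2)=\{0\}$ for every $t>0$. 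With these two fixes your argument goes through.
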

\begin{proof}
Since $\gamma \in \mathcal{C}$  is antipodal invariant and passes through the origin, one can check that $\lim_{s \rightarrow 0} \frac{\gamma^{\perp}}{|\gamma|^2}(s)=0$, where $\gamma(s)$ is a local parametrization of $\gamma$ with $\gamma(s)=-\gamma(-s)$. By Lemma 2.1, we have that $\overrightarrow{k}(z_0)=\overrightarrow{k}(-z_0)=0$, where $\gamma(z_0)=\gamma(-z_0)=0$. Consequently, $\overrightarrow{H}(z_0)=\overrightarrow{H}(-z_0)=0$. This implies that $z_0$ and $-z_0$ are critical points of the Lagrangian angle $\theta_L$. It can be check easily that they correspond to local minimum and local maximum critical points.  The strong maximum principle applied to $\frac{d}{dt}\theta= \Delta \theta$ implies that $\theta_t(z_0)< \theta(z_0)$ and $\theta_t(-z_0)> \theta(-z_0)$.
\end{proof}

Let us use  $\text{Area}(\gamma)$ to denote the area enclosed  by $\gamma\in \mathcal{C}$.
By the Stokes' theorem we have that
$\textrm{Area}(\gamma_t)=-\frac{1}{2}\int_{\gamma_t} \langle \gamma_t,\nu\rangle d_{\gamma_t}$, where $\nu$ is the unit outward normal vector of $\gamma$.
\begin{lemma} \label{variation area}
	\begin{eqnarray*}
	\pi(T-t)\leq \textrm{Area}(\gamma_t)-\text{Area}(\gamma_T)\leq 3\pi(T-t).
\end{eqnarray*}
\end{lemma}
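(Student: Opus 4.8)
The plan is to prove the two–sided bound by establishing the pointwise differential inequality $-3\pi\le \frac{d}{dt}\Area(\gamma_t)\le -\pi$ for every $t\in[0,T)$ and integrating it over $[t,T)$; the value $\Area(\gamma_T)=\lim_{s\uparrow T}\Area(\gamma_s)$ then exists because the bound forces $t\mapsto\Area(\gamma_t)$ to be monotone decreasing, and it is bounded below by $0$.

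To compute the derivative I would use that the equivariant flow moves $\gamma_t$ with the purely normal velocity $\overrightarrow{k}-(n-1)\frac{\gamma^{\perp}}{|\gamma|^2}$, and that the self–intersection point of $\gamma_t$ stays pinned at the origin: it is a fixed point of the flow since antipodal symmetry is preserved and $\gamma_t\in\mathcal C$ by Corollary~\ref{preserved C}. Hence the first variation of the area enclosed by the two lobes carries no boundary contribution, and, decomposing the figure eight into its two lobes (congruent by antipodal invariance),
\[
\frac{d}{dt}\Area(\gamma_t)=\int_{\gamma_t}\Big\langle \overrightarrow{k}-(n-1)\tfrac{\gamma^{\perp}}{|\gamma|^2},\ \nu\Big\rangle\, d\gamma_t
= 2\int_{\partial\Omega}\Big\langle \overrightarrow{k}-(n-1)\tfrac{\gamma^{\perp}}{|\gamma|^2},\ \nu\Big\rangle\, ds ,
\]
where $\Omega$ is one lobe and $\nu$ its outward unit normal. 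I would evaluate the two terms separately. For the curvature term, $\partial\Omega$ is smooth except for one corner at the origin, whose interior angle is the transversal self–intersection angle $\theta_0(t)\in(0,\pi)$; Gauss--Bonnet on the disk $\Omega$ gives $\int_{\partial\Omega}\langle\overrightarrow{k},\nu\rangle\, ds=-\big(2\pi-(\pi-\theta_0)\big)$. For the radial term, one uses $\langle\tfrac{\gamma^{\perp}}{|\gamma|^2},\nu\rangle=\langle\tfrac{\gamma}{|\gamma|^2},\nu\rangle=\langle\nabla\log|\gamma|,\nu\rangle$ together with the fact that $\log|\gamma|$ is harmonic off the origin, so the divergence theorem on $\Omega$ — the origin entering as a boundary vertex of interior angle $\theta_0$ — gives $\int_{\partial\Omega}\langle\tfrac{\gamma^{\perp}}{|\gamma|^2},\nu\rangle\, ds=\theta_0$; equivalently this is the total variation of the polar angle of $\gamma$ along the lobe. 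A cleaner bookkeeping of the same computation goes through the Lagrangian angle: since $\theta=\arg(\gamma')+(n-1)\arg(\gamma)$ on the equivariant Lagrangian (equation~\eqref{eq2}), the two contributions combine into $\pm2\big(\theta_t(p_+)-\theta_t(p_-)\big)$, twice the gap of the Lagrangian angle between the two pre-images $p_\pm$ of the self-intersection.

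The remaining — and main — point is to trap this quantity in $[-3\pi,-\pi]$. Here the hypotheses and the preservation results are what I would invoke: $\gamma_t\in\mathcal C$ keeps the crossing transversal, so $\theta_0(t)\in(0,\pi)$ stays strictly bounded; under hypothesis (ii) of Theorem~\ref{main theorem}, Lemma~\ref{strictly contained cone} confines $\gamma_t$ to $\Omega_{\pi/n}$, which controls simultaneously $\theta_0(t)$ and the polar excursion of each lobe, while under hypothesis (i) the ``at most four points on every circle'' property — preserved along the flow by Theorem~1.3 of \cite{A}, as already used in Corollary~\ref{preserved C} — forces each lobe to be star-shaped about the origin, pinning its polar excursion to $\theta_0$. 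In the Lagrangian-angle picture one instead uses that $\theta_t$ solves the heat equation $\partial_t\theta=\Delta\theta$ on the flowing sphere and that $p_\pm$ are its critical points (exactly as in the proof of Lemma~\ref{strictly contained cone}), whence the strong maximum principle renders $\theta_t(p_+)-\theta_t(p_-)$ monotone and controlled by its value at $t=0$. I expect the delicate part of the argument to be precisely this last step — extracting the sharp constants $\pi$ and $3\pi$ out of the geometry at the self-intersection — rather than the first-variation computation, which is routine.
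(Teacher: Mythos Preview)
Your overall plan --- differentiate $\Area(\gamma_t)$ via the first variation, bound the derivative between $-3\pi$ and $-\pi$, and integrate --- is exactly the paper's.  Where you diverge is in the treatment of the two terms, and you end up making the argument far more elaborate than it is.

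First, the radial contribution.  The paper does not obtain $\theta_0$ from the corner; it argues that
\[
\int_{\gamma_t}\Big\langle \frac{\gamma^{\perp}}{|\gamma|^2},\ \nu\Big\rangle\,d\gamma_t \;=\; 0
\]
by applying the divergence theorem to the vector field $X=z/|z|^2$, using that the origin is \emph{not in the interior} of the region enclosed by $\gamma_t$.  Thus the $(n-1)$-term disappears entirely and
\[
\Area'(t)\;=\;-\int_{\gamma_t}\langle \vec{k},\nu\rangle\,d\gamma_t .
\]
(Notice your route would produce an answer depending on $n$ through the factor $(n-1)\theta_0$, whereas the stated inequality is $n$-independent; that is already a warning sign.)

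Second --- and this is the main point --- the constants $\pi$ and $3\pi$ require none of the machinery you line up.  The paper just applies Gauss--Bonnet:
\[
\int_{\gamma_t}\langle\vec{k},-\nu\rangle\,d\gamma_t + \alpha_t \;=\; 2\pi,
\]
where $\alpha_t$ is the exterior angle of $\gamma_t$ at the origin, and then uses only the trivial fact $\alpha_t\in[-\pi,\pi]$ to conclude $\pi\le -\Area'(t)\le 3\pi$.  There is no appeal to hypothesis~(i) or~(ii) of Theorem~\ref{main theorem}, no Lemma~\ref{strictly contained cone}, no heat equation for the Lagrangian angle, no star-shapedness of the lobes.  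Lemma~\ref{variation area} holds for \emph{every} $\gamma\in\mathcal{C}$, unconditionally; indeed it is later used in the Example to curves that satisfy neither (i) nor (ii).  So your expectation that ``extracting the sharp constants $\pi$ and $3\pi$'' is the delicate step is precisely backwards: once the radial term is gone, that step is a one-liner.
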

\begin{proof}
	Let $\gamma_t(u)$ be a parametrization of $\gamma_t$. Using  that $\nu=i \frac{\gamma_t'}{|\gamma_t'|}$, we have   that $\textrm{Area}(\gamma_t)=-\frac{1}{2}\int_{\gamma_t} \langle \gamma_t,i\,\gamma_t^{\prime}\rangle du$. Hence,
	\begin{eqnarray*}
		\textrm{Area}^{\prime}(t)&=&-\frac{1}{2}\int_{\gamma_t} \bigg(\langle \partial_t \gamma, i\,\gamma_t^{\prime}\rangle  + \langle
		\gamma, i\,(\partial_t\gamma)^{\prime} \rangle\bigg) du \\
		&=&-\frac{1}{2}\int_{\gamma_t}\bigg(\langle \partial_t \gamma, i\,\gamma_t^{\prime}\rangle + \langle \gamma_t,i\partial_t\gamma\rangle^{\prime} - \langle \gamma^{\prime},i\,\partial_t\gamma\rangle\bigg)du \\
		&=&-\int_{\gamma_t}\langle \partial_t \gamma, i\,\gamma_t^{\prime}\rangle\, du -\frac{1}{2}\int_{\gamma_t}\langle \gamma_t,i\partial_t\gamma\rangle^{\prime}\,du = -\int_{\gamma_t}\langle \partial_t \gamma, \nu\rangle\, d\gamma_t.
	\end{eqnarray*}
	The last equality follows from the Fundamental Theorem of Calculus.
	Hence,
	\begin{eqnarray*}
		\text{Area}'(t)=-\int_{\gamma_t}\bigg< \overrightarrow{k} -(n-1)\frac{z^{\perp}}{|z|^2}, \nu\bigg>\, d_{\gamma_t}=-\int_{\gamma_t}\langle \overrightarrow{k}, \nu\rangle d_{\gamma_t}.
	\end{eqnarray*}
The last equality follows from the Divergence Theorem applied to vector field $X= \frac{z}{|z|^2}$	and the fact that $z=0$ is  not in the interior of the region enclosed by $\gamma_t$.
		Combining the  Gauss-
	Bonnet theorem  and the fact  that the exterior angle $\alpha_t$  of $\gamma_t$ at the origin is in $[-\pi,\pi]$ we obtain
	\[\int_{\gamma_t}\langle \overrightarrow{k},-\nu\rangle d_{\gamma_t} +
	\alpha_t=2\pi\Longrightarrow \pi \leq \int_{\gamma_t}\langle \overrightarrow{k},-\nu\rangle d_{\gamma_t} \leq 3\pi.\]
	Therefore, $-3\pi \leq \textrm{Area}^{\prime}(\gamma_t)\leq -\pi$. The Lemma now follows if we integrate this quantity from $t$ to $T$.
\end{proof}

\section{Proof of the Theorem}
Let $L_t$ be a solution of the mean curvature flow starting on a $k$-dimensional submanifold $L$ in $\mathbb{R}^m$. Consider the backward heat kernel
\[\Phi_{x_0,T}(x,t)= \frac{1}{(4\pi(T-t))^{\frac{k}{2}}}e^{-\frac{|x-x_0|^2}{4(T-t)}}.\]
The following formula is known as the Huisken's monotonicity formula:
\begin{eqnarray}\label{huisken monotonicity}
	\frac{d}{dt}\int_{L_t} f_t \Phi_{x_0,T}d\mathcal{H}^k &=&  \nonumber \\
	&&\int_{L_t} \bigg(\frac{d}{dt}f_t - \Delta f_t - \bigg|H- \frac{(x-x_0)^{\perp}}{2(T-t)}\bigg|^2f_t 
	\bigg)\Phi_{x_0,T} d\mathcal{H}^k,
\end{eqnarray}
where $d\mathcal{H}^k$ denotes the $k$-dimensional Hausdorff measure.
 
Recall that if $\{L_t\}_{t\in[0,T)}$ is the Lagrangian mean curvature flow starting at $L$, then \[L_s^{\sigma}=\sigma\,(L_{T+\frac{s}{\sigma^2}}-x_0),\] for $s \in [-T\lambda^2,0)$, also satisfies the Lagrangian mean curvature flow and is referred as the tangent flow at $x_0$. The following is a restatement of Theorem \ref{main theorem}:

\begin{theorem}
	\textit{Let $\gamma$ be a curve in $\mathcal{C}$  which satisfies  at least one of the following assumptions
	\begin{itemize}
	\item[i)] $\{\gamma\}\cap \mathbb{S}^1(R)$ has at most $4$ points for every $R>0$;
	\item[ii)]  $\{\gamma\}\subset \Omega_{\frac{\pi}{n}}$.
\end{itemize}
If $\{\gamma_t\}_{t\in [0,T)}$ is the maximal equivariant flow of $\gamma$, then $\gamma_T=\{0\}$. Moreover, the tangent flow at the origin is a line with multiplicity two.}
\end{theorem}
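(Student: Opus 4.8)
The plan is to adapt Neves's scheme \cite{N,N1}: use Huisken's monotonicity formula \eqref{huisken monotonicity} to control the singularity, use that the Lagrangian angle solves the heat equation to show the tangent flow is a special Lagrangian — hence minimal — cone, and then use the features of $\mathcal C$ preserved by the flow together with (i)--(ii) to identify that cone with a single Lagrangian plane of multiplicity two. First I would record that $T<\infty$ and that the flow is trapped. By Lemma~\ref{variation area}, $\mathrm{Area}(\gamma_t)$ is strictly decreasing with $\mathrm{Area}'\le-\pi$, and it is nonnegative, so $T\le\mathrm{Area}(\gamma_0)/\pi$. Comparing the Lagrangians over $\gamma_t$ and over the concentric circles $\{|z|=\sqrt{R_0^2-2nt}\}$ — the latter solving \eqref{equivariant flow} — by the avoidance principle, with $R_0=\max_{\mathbb S^1}|\gamma_0|$, gives $\{\gamma_t\}\subset\overline{B}_{\sqrt{R_0^2-2nt}}$; thus the curve cannot escape and $T\le R_0^2/(2n)$.

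Away from the origin the term $(n-1)\gamma^\perp/|\gamma|^2$ is a bounded smooth lower‑order perturbation of curve shortening flow, and $\gamma_t$ is embedded there since $\gamma_t\in\mathcal C$ (Corollary~\ref{preserved C}); by Huisken's monotonicity and the classification of embedded planar self‑shrinkers (straight lines), every spacetime point $(p,t)$ with $p\ne0$ is regular, so the only possible singularity is at the origin. For $t<T$ the immersed Lagrangian $L_t$ over $\gamma_t$ is, near $0$, the union of the two transverse Lagrangian planes tangent to $L_t$ at its two poles, so the Gaussian density is $\Theta(0,t)=2$; hence $\Theta(0,T)\ge2$ by upper semicontinuity of Gaussian density. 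By \eqref{huisken monotonicity} the parabolic rescalings $L^\sigma_s=\sigma L_{T+s/\sigma^2}$ subconverge to a self‑shrinking zero‑Maslov Lagrangian flow of Gaussian density $\Theta(0,T)$; inserting $\tfrac{d}{dt}\theta=\Delta\theta$ into \eqref{huisken monotonicity} as in \cite{N,N1} forces $\theta$ locally constant there, so the tangent flow is a special Lagrangian, hence minimal and therefore static, cone. In the equivariant picture its curve $\gamma^\infty$ — the blow‑up of $\gamma_t$ at the origin — is a union of lines through $0$ (antipodal invariance upgrading rays to lines), the direction of each constrained by \eqref{eq2}, together with the monotonicity of the Lagrangian angle oscillation, to a discrete, $\tfrac{\pi}{n}$‑spaced set; and $\gamma^\infty$ is recovered from the two branches of $\gamma_t$ near $0$, so its total multiplicity $\Theta(0,T)$ equals $2$. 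Hence $\gamma^\infty$ is either a single line with multiplicity two or two distinct lines each with multiplicity one.

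It remains to exclude the latter and deduce the collapse. Under (ii): Lemma~\ref{strictly contained cone}, together with the strong maximum principle for $\tfrac{d}{dt}\theta=\Delta\theta$ making $\{\gamma_t\}\subset\Omega_{\pi/n-\delta}$ hold with a fixed $\delta>0$ for $t$ bounded away from $0$, passes to $\gamma^\infty\subset\overline{\Omega}_{\pi/n-\delta}$; a closed sector of opening $<\tfrac{\pi}{n}$ contains at most one line of a $\tfrac{\pi}{n}$‑spaced family, so $\gamma^\infty$ is a single line. Under (i): by Theorem~1.3 of \cite{A} the bound $\#(\gamma_t\cap\mathbb S^1(R))\le4$ persists and, by scale invariance, $\gamma^\infty$ meets every circle in at most four points; two distinct lines realise this bound, so to rule them out I would argue instead that a static pair of transverse Lagrangian planes is not a singularity of the immersed flow — each of the two sheets is a smooth graph of bounded curvature near $0$, by White's local regularity applied sheet by sheet — so were $\gamma^\infty$ two distinct lines, the flow would extend smoothly past $T$ with $\gamma_T$ a genuine figure eight, contradicting maximality of $T$. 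In either case $\gamma^\infty$ is a single line with multiplicity two.

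Finally, the same reasoning shows that every singularity at the origin is removable for the immersed flow until the curve degenerates, and degeneracy at the origin is the only remaining possibility; combined with $\mathrm{Area}(\gamma_t)\to\mathrm{Area}(\gamma_T)$ and the structure of the singularity this forces $\mathrm{Area}(\gamma_T)=0$, hence $\gamma_T=\{0\}$, with tangent flow at the origin the multiplicity‑two line found above. The main obstacle is precisely this last passage — upgrading ``the tangent cone at the origin is a single multiplicity‑two plane'' to ``the entire curve, not merely its two branches at the origin, collapses to $\{0\}$'' — which is where (i)--(ii) are genuinely used, and where, as the Remark indicates, the argument must break down for wide cones.
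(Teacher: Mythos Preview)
Your overall strategy is the paper's own --- Neves's monotonicity scheme, the heat equation for the Lagrangian angle forcing the tangent flow to be a special Lagrangian cone, and White's regularity applied sheet-by-sheet --- and your handling of regularity away from the origin (tangent flows of the perturbed curve-shortening flow are multiplicity-one lines, hence White applies) is a reasonable alternative to the paper's appeal to Oaks \cite{O} on loss of self-intersections. The architecture, however, is tangled in a way that hides a genuine gap.

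The problematic step is the assertion that the tangent cone $\gamma^\infty$ has total multiplicity exactly $2$ because it ``is recovered from the two branches of $\gamma_t$ near $0$''. That is only true when the rest of the curve stays bounded away from the origin, i.e.\ when $\gamma_T\neq\{0\}$ --- the negation of what you want. If the whole figure-eight collapses, the blow-up sees the entire closed curve, and you have given no upper bound on the multiplicity (under (i) the $4$-point circle bound helps, but under (ii) you have nothing). Likewise your exclusion of two distinct multiplicity-one lines under (i) (``White $\Rightarrow$ extend past $T$'') presupposes that the rest of the curve is already regular at $T$, which again is only available if $\gamma_T\neq\{0\}$. The paper avoids this circularity by cleanly splitting the argument: it \emph{first} assumes by contradiction that the origin is singular \emph{and} $\gamma_T\neq\{0\}$, so that in a small ball one genuinely has only two embedded arcs, each of Gaussian density close to $1$, and White gives regularity --- contradiction; combined with regularity away from $0$ and maximality of $T$, this already yields $\gamma_T=\{0\}$.

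Only \emph{after} collapse is established does the paper identify the tangent flow, and here the tool you are missing is the two-sided area estimate of Lemma~\ref{variation area}. Since $\pi(T-t)\le\mathrm{Area}(\gamma_t)\le 3\pi(T-t)$, the parabolically rescaled closed curves $\gamma^i_s$ enclose \emph{uniformly bounded} area at some fixed $s^*<0$, while becoming non-compact. A closed antipodal figure-eight that converges in $C^{1,1/2}_{\mathrm{loc}}(\mathbb R^2\setminus\{0\})$ to a union of lines through the origin with bounded enclosed area must in fact converge either to a single line with multiplicity two, or to $2\gamma_A+2\gamma_B$ with $\gamma_A\neq\gamma_B$; the latter is then excluded by (i) (it would force eight circle-intersections) or by (ii) (the $\pi/n$ quantisation of the angle together with Lemma~\ref{strictly contained cone}). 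Your final paragraph names exactly this passage as ``the main obstacle'' but does not supply the area control that resolves it.
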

\begin{proof}
Let us  prove first that if $z=0$ is a singular point, then $\gamma_T=\{0\}$. Arguing by contradiction, we assume that $z=0$ is a singular point for $\{\gamma_t\}_{0\leq t<T}$ and  $\gamma_T\neq \{0\}$. 
Given $\sigma_i\rightarrow\infty$, let $\gamma_s^{i}=\sigma_i \gamma_{T+\frac{s}{\sigma_i^2}}$.
\begin{lemma}Let $a$ and $b$ real numbers such that $a<b<0$. Then
\[\lim_{i\rightarrow\infty}\int_{a}^{b}\int_{\gamma_s^i\cap A(\frac{1}{\eta},\eta,0)} \bigg(|\overrightarrow{k}|^2+|\gamma^{\perp}|^2\bigg)\,d\mathcal{H}^1 ds=0,\]
where $A(\frac{1}{\eta},\eta,0)$ is an annulus centered at $z=0$ with inner and outer radius $\eta$ and $\frac{1}{\eta}$, respectively.
\end{lemma}
\begin{proof}
	Let $L_s^i$ be the  immersed Lagrangian sphere in $\mathbb{C}^2$ obtained via $L_s^i=(\gamma_s^iG_1,\ldots,\gamma_s^iG_n)$.
 It is proved in Lemma 5.4 in \cite{N} that
\begin{eqnarray}\label{neves equation}
\lim_{i\rightarrow\infty}\int_{a}^{b}\int_{L_s^i\cap B_R(0)} \bigg(|H|^2+|x^{\perp}|^2\bigg)\, d\mathcal{H}^n(x)ds=0,
\end{eqnarray}
where $H$ is the mean curvature vector of $L_s^i$.
For the convenience of the reader let us recall the proof of this fact. It is a standard computation to check that the Lagrangian angle $\theta$ obeys the following evolution equation $\frac{d}{dt}\theta_{i,s}^2= \Delta \theta_{i,s}^2 - 2|H|^2$. Applying (\ref{huisken monotonicity}) with $f_t= \theta_{i,s}^2$ and $f_t=1$, we obtain
\begin{eqnarray}\label{equation1}
\frac{d}{ds} \int_{L_s^i} \theta_{i,s}^2 \Phi d\mathcal{H}^n&=& \int_{L_s^i}\bigg(-2 |H|^2 - \bigg|H- \frac{x^{\perp}}{2s}\bigg|^2 \theta_{i,s}^2\bigg) \Phi\, d\mathcal{H}^n \\
\frac{d}{ds} \int_{L_s^i} \Phi d\mathcal{H}^n&=& \int_{L_s^i} - \bigg|H- \frac{x^{\perp}}{2s}\bigg|^2\Phi\, d\mathcal{H}^n,
\end{eqnarray}
respectively. Integrating (\ref{equation1}) from $a$ to $b$ gives
\begin{eqnarray*}
	2\lim_{i  \rightarrow \infty}\int_{a}^{b}\int_{L_s^i} |H|^2\, \Phi\, d\mathcal{H}^n ds &\leq& \lim_{i\rightarrow\infty}\int_{L_b^i} \theta_{i,b}^2 \Phi\,d\mathcal{H}^n - \lim_{i\rightarrow\infty}\int_{L_a^i} \theta_{i,a}^2 \Phi\,d\mathcal{H}^n	=0.
\end{eqnarray*}
The last inequality follows from the scale invariance and monotonicity of $\int_{L_t}\theta^2\, \Phi d\mathcal{H}^n$. Similarly, we obtain
\begin{eqnarray*}
	\lim_{i  \rightarrow \infty}\int_{a}^{b}\int_{L_s^i} \bigg|H- \frac{x^{\perp}}{2s}\bigg|^2\, \Phi\, d\mathcal{H}^n ds=
	\lim_{i\rightarrow\infty}\int_{L_b^i} \Phi\,d\mathcal{H}^2 - \lim_{i\rightarrow\infty}\int_{L_a^i} \Phi\,d\mathcal{H}^n=0.
\end{eqnarray*}
It follows from the triangular inequality that 
\begin{eqnarray*}
	\lim_{i  \rightarrow \infty}\int_{a}^{b}\int_{L_s^i} \bigg|\frac{x^{\perp}}{2s}\bigg|^2\, \Phi\, d\mathcal{H}^n ds=0.
\end{eqnarray*}
This completes the proof of (\ref{neves equation}).
As $|H|^2=|\overrightarrow{k}-(n-1)\frac{\gamma^{\perp}}{|\gamma|^2}|^2$ and $|x^{\perp}|^2=|\gamma^{\perp}|^2$, we obtain for each $\eta>0$ that
\[\lim_{i\rightarrow\infty}\int_{a}^{b}\int_{\gamma_s^i\cap A(\frac{1}{\eta},\eta,0)} \bigg(|\overrightarrow{k}|^2+|\gamma^{\perp}|^2\bigg)d\mathcal{H}^1 ds=0.\]
\end{proof}
  From previous lemma it follows that for almost every $s\in (a,b)$ that
  \[
  \lim_{i\rightarrow\infty}\int_{\gamma_s^i\cap A(\frac{1}{\eta},\eta,0)} \bigg(|\overrightarrow{k}|^2+|\gamma^{\perp}|^2\bigg)d\mathcal{H}^1=0.
  \]
  This implies that   $\gamma_s^{i}$ converges to a union of  lines in $C_{loc}^{1,\frac{1}{2}}(\mathbb{R}^2-\{0\})$. In fact, each connected component of $\gamma_s^i$ inside $B_R(0)-\{0\}$  converge to a line segment with multiplicity one since the convergence is in $C_{loc}^{1,\frac{1}{2}}(\mathbb{R}^2-\{0\})$.

Assume first that $\gamma$ satisfies \textbf{item i)}, then by Proposition \ref{non-avoidance equivariant flow} and Corollary \ref{preserved C}, the curve   $\gamma_s^{i}$ in $B_R(0)-\{0\}$ has two embedded connected components. Hence, each converges to a line segment  with multiplicity one in $B_R(0)-\{0\}$. Equivalently, in a neighborhood of the origin   $L_t$ is a union of two smooth embedded discs intersecting transversally at a interior point. Hence, each piece of $L_s^{i}$ converges weakly to a plane with multiplicity one. Since  $\gamma_T\neq \{0\}$, we can talk about the localized  Gaussian density of each connected component of  $L_t\cap B_r(0)$ computed at $(0,T)$ which will be very close to one. Applying White's Local Regularity Theorem, see localized version Theorem 5.6 in \cite{E}),to each component of $L_t\cap B_r(0)$, we conclude that the  origin is not a singularity of $\{L_t\}_{t\in[0,T)}$, contradiction.

To handle other connects components of $\gamma_s^i$ in $B_{4R}(0)$ we  study the Lagrangian angle $\theta_s^i$. Let $\beta$ be a primitive of $\lambda_L$. It is proved in \cite{N1} that  $\nabla \beta= J(x^{\perp})$ and  $\frac{d}{dt}\beta= \Delta \beta - 2\theta$. This implies that the function $u= \beta + 2(t-t_0)\theta$ satisfies $\frac{d}{dt}f(u)= \Delta f(u) - f^{\prime\prime}(u)|x^{\perp}+ 2(t-t_0)H|^2$, where $f\in C_0^{\infty}(\mathbb{R})$. Plugging the function $f(u)$ in (\ref{huisken monotonicity}), we obtain
\begin{eqnarray*}
	\frac{d}{ds}\int_{L_s^i} f(u_s^i) \Phi = - \int_{L_s^i} \bigg|H- \frac{x^{\perp}}{2s} \bigg|^2f(u_s^i) \, \Phi\,  + f^{\prime\prime}(u_s^i)\bigg|x^{\perp}+ 2(s-s_0)H\bigg|^2\Phi.
\end{eqnarray*}
Integrating this formula from  $-1$ to $s_0$ and using (\ref{neves equation}), we obtain
\begin{eqnarray*}
	\lim_{i  \rightarrow \infty} \int_{L_{s_0}^i\cap B_{4R}(0)} f(\beta_{s_0}^i)\Phi\, =\, \lim_{i  \rightarrow \infty} \int_{L_{-1}^i\cap B_{4R}(0)} f(\beta_{-1}^i - 2(1 + s_0)\theta_{-1}^i)\Phi.
\end{eqnarray*}
Let $\gamma^i$ be a connected component of $\gamma_s^i$ in $B_{4R}(0)$ that intersects $B_R(0)$ and does not passes through the origin. Since $|\nabla f(\beta_s^i)|$ is bounded, there exists a constant $b_{s_0}$ such that $\lim_{i  \rightarrow \infty} f(\beta_{s_0}^i)=f(b_{s_0})$. Similarly, $\lim_{i  \rightarrow \infty} f(\beta_{-1}^i)=f(b_{-1})$. As before, $\gamma_i$ converges in $C^{1,\frac{1}{2}}(\mathbb{R}^2-\{0\})$ to lines $l_{\overrightarrow{v_1^s}}$ and $l_{\overrightarrow{v_2^s}}$ in the direction of  the vectors $\overrightarrow{v_i^s}$. Moreover, 
\begin{eqnarray*}
\lim_{i  \rightarrow \infty} \int_{\gamma^i} f(\beta_{-1}^i - 2(1 + s_0)\theta_{-1}^i)\Phi \,d\mathcal{H}^1= \sum_{i=1}^{2}\int_{l_{\overrightarrow{v_i}}} f(b_{-1}- 2(1+s_0)\theta_i)\Phi\,d\mathcal{H}^1.
\end{eqnarray*}
Note that (\ref{eq2}) implies that $\theta_s^i$ converge to a constant in each connected component of $\gamma_s^i\cap (B_R(0)-B_r(0))$.
We claim that $\theta_1=\theta_2$. Otherwise, by choosing  $f$  with  support near $b_{s_0}$ and equal to $1$ near $b_{s_0}$, we obtain
\[
\sum_{i=1}^{2}\int_{l_{\overrightarrow{v_i}}\cap B_R(0)} \Phi\,d\mathcal{H}^1= \int_{l_{\overrightarrow{v_{i_0}}}\cap B_R(0)} \Phi\,d\mathcal{H}^1,
\]
contradiction.

Let us assume that $\gamma$ satisfies \textbf{item ii)}. In this case,  $\gamma_s^i\cap B_{4R}(0)$  has a connected component $\gamma^i$ intersecting $B_{2R}$ which converges in  $C^{1,\frac{1}{2}}(B_R(0)-\{0\})$ to   the lines $\gamma_A$ and $\gamma_B$ with multiplicity one. Moreover,  $\theta_s^i$ converge to a constant $\theta_0$ on each connected component of $\gamma^i \cap (B_R(0)-B_r(0))$. This implies that $\gamma_A=\gamma_B$ with the same orientation or the angle between   $\gamma_A$ and $\gamma_B$ is $\frac{\pi}{n}$. The first case cannot happen since $I_2(\beta_s^i, \mathbb{S}^1(0, r))= 0$, where $I_2(\cdot, \cdot)$ is the intersection number mod 2. The second case cannot happen since $\{\gamma_t\}\subset \Omega_{\frac{\pi}{n}- \delta_t}$ by Lemma \ref{strictly contained cone}. Hence, the origin  is not a singularity if we assume that $\gamma_T\neq \{0\}$.

On the other hand, no singularities away from the origin occur. Indeed, in \cite{O} J. Oaks complement the work of S. Angenent on singularities of equations of type $\frac{d}{dt}\gamma_t= V(\overrightarrow{T},k)\overrightarrow{N}$ by showing that near the singularity the curve  $\gamma_t$ must lose a self intersection. Since Proposition \ref{non-avoidance equivariant flow} asserts the only self intersection of $\gamma_t$ is at the origin we are done.   

Now let us prove that the tangent flow at the singular point is a line through the origin with multiplicity $2$.
For this we choose  a sequence of scale
factors $\lambda_i \rightarrow +\infty$  and
we set
$\gamma_{s}^i=\lambda_i\gamma_{T+\frac{s}{\lambda_i^2}}$ defined in $[-T\lambda_i^2,0)$. As discussed before $\gamma_s^i$ converges in $C_{\text{loc}}^{1,\frac{1}{2}}(\mathbb{R}^2-\{0\})$ to a union of  two lines through the origin for almost every $s$ fixed. Let us denote them by $l_A$ and $l_B$. As $\text{Area}(\gamma_t)$ is going to zero there exist a unique $t_i\in [0,T)$ for which $\text{Area}(\gamma_{t_i})=\frac{1}{\lambda_i^2}$. This implies that $\textrm{Area}(\gamma_{s_1^i}^i)=1$, where $s_1^i$ is given by $s_1^i=-\lambda_i^2(T-t_i)$. Since $\pi(T-t)\leq A(t)\leq 3\pi(T-t)$ by Lemma \ref{variation area}, we obtain that
 $s_1^i \in [-\frac{1}{\pi},-\frac{1}{3\pi}]$. In particular, if
$s^{*}=-\frac{1}{3\pi}$, then $\limsup_{i  \rightarrow \infty}\textrm{Area}(\gamma_{s^{*}}^i)\leq 1$. Therefore, $\gamma_{s^{*}}^i$  must converge to $2\gamma_A+ 2\gamma_B$ or $\gamma_A=\gamma_B$ since $\gamma_{s^{*}}^i$ is becoming non-compact enclosing bounded area. The first case does not happen as it violates the assumptions i) and ii) as discussed above.
\end{proof}

Let us construct equivariant Lagrangian spheres in $\mathbb{R}^4$ that do not collapse to a point along the mean curvature flow.

\begin{example}
	Let $\gamma_0$ be the curve $\gamma^{\alpha}(u)=\sin(\frac{\pi u}{\alpha})^{-\frac{\alpha}{\pi}}(\cos(u),\sin(u))$ with $u \in \mathbb{R}$. The  existence of a solution of the equivariant flow starting at $\gamma^{\alpha}$ is given in \cite{N}, let us denote it by $\{\gamma_t\}_{t\in[0,T_{\alpha})}$. It is shown in \cite{N} that when $\alpha>\frac{\pi}{2}$, then $T_{\alpha}<\infty$ and  $\gamma_t$ develops a singularity at the origin. When $\alpha \in (0,\pi)$, then  $\gamma^{\alpha}$  is contained in $\Omega_{\alpha}$ and it is asymptotic to its boundary. Consider the region $U_{\alpha}$ in $\Omega_{\alpha}$ that is bounded by $\{\gamma^{\alpha}\}\cup \{-\gamma^{\alpha}\}$. One can check that $U_{\alpha}$ has infinite area. Choose $\beta\in \mathcal{C}$ contained in $U_{\alpha}$   whose  area enclosed, $\text{Area}(\beta)$, is greater than $3\pi\, T_{\alpha}$. See Figure 2 for the case  $\alpha=\pi$. Let $\{\beta_t\}_{t\in[0,T)}$ be the solution of the equivariant flow starting at $\beta$. By the avoidance principle, $\beta_t$ and $\gamma_t$ do not intersect. Hence, $T<T_{\alpha}$. On the other hand, by Lemma \ref{variation area} we have that $\text{Area}(\beta_{T})\geq \text{Area}(\beta)- 3\pi T\geq 3\pi (T_{\alpha}-T)>0$. Therefore, a non trivial singularity must occur at the origin.
\end{example}
\begin{figure}[h]\label{whitneysphere2}
	\includegraphics[scale=0.15]{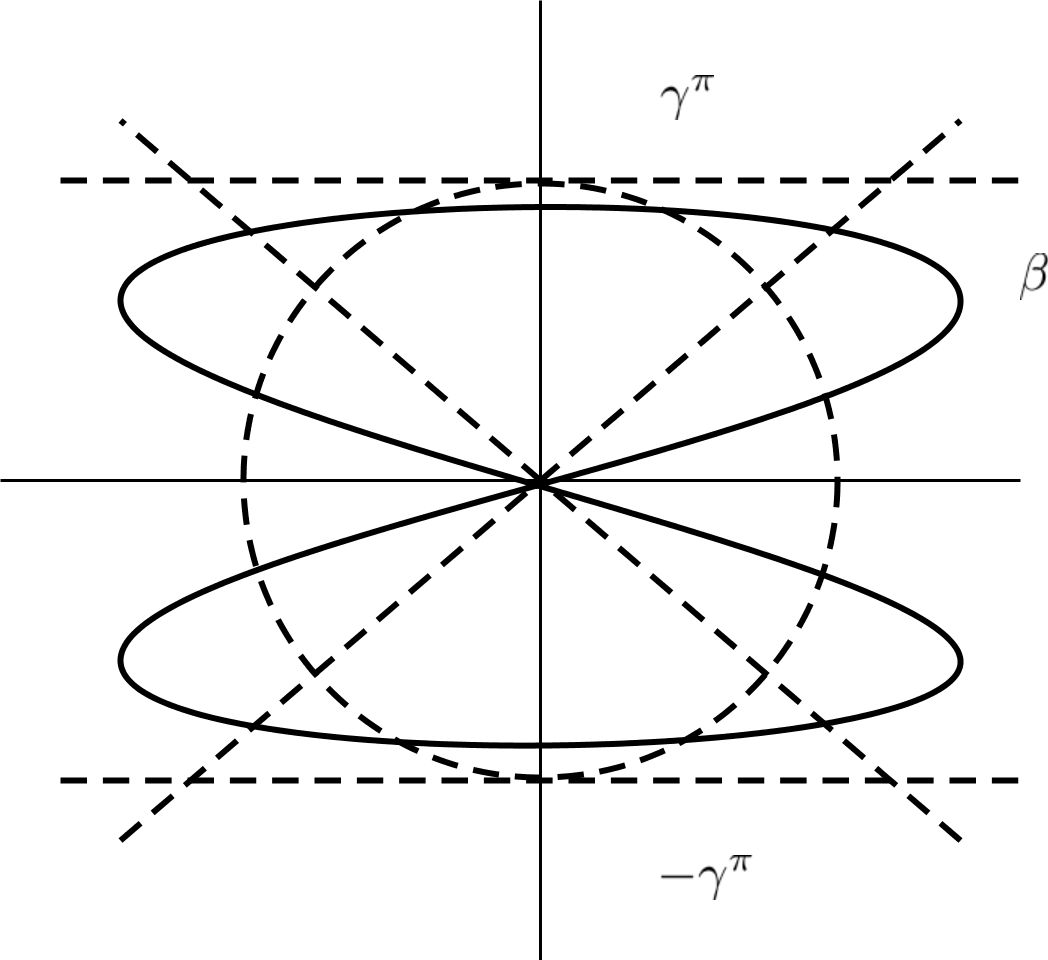}
	\caption{Curve $\beta$}
\end{figure}

Let us show that any Type II dilation of $\gamma_t$ near the singularity converges to an eternal solution of curve shortening flow. As in Chapter 4 in \cite{M}, there exist for each $k>0$, points $z_k\in \gamma_t(\mathbb{S}^1)$,  $t_k\in [0, T-\frac{1}{k}]$, and scaling $\lambda_k>0$ such that  $\beta_s^k= \lambda_k (\gamma_{T+\frac{s}{\lambda_k^2}}-z_k)$ satisfies 
\begin{eqnarray*}
	\frac{d}{ds}\beta_s^k= \overrightarrow{k}(\beta_s^k)- \frac{(\beta_s^k+ \lambda_k z_k)^{\perp}}{|\beta_s^k+ \lambda_k z_k|^2},
\end{eqnarray*}
where $s\in (a_k,b_k)$. Moreover, $\lim_{k\rightarrow\infty}a_k=-\infty$, $\lim_{k\rightarrow\infty}b_k=\infty$, and $0< \lim_{k\rightarrow\infty}\sup_{(a_k,b_k)\times \mathbb{S}^1}|\overrightarrow{k}(\beta_s^k)|\leq C$. It is proved that $\beta_s^k$ converge smoothly as $k\rightarrow 
\infty$ to a non-compact flow $(\beta_s)_{s\in \mathbb{R}}$. We claim that  $\lim_{k\rightarrow\infty} \lambda_k z_k = \infty$. If not,  then we could replace the points $z_k$ by $z=0$ and obtain the same conclusion. This is impossible since central dilations converge to lines. Therefore, as $k\rightarrow \infty$, 
\[\frac{d}{ds} \beta_s= \overrightarrow{k}(\beta_s).
\]

\noindent\textit{Acknowledgements.}
I would like to thank Jason Lotay for suggesting this problem and for his encouragement and support during this work.
I also thank my advisor Andr\'{e} Neves for many helpful conversations.
This work was supported by the Engineering and
Physical Sciences Research Council [EP/L015234/1], and the EPSRC Centre for Doctoral
Training in Geometry and Number Theory (London School of Geometry and
Number Theory), University College London.

\end{document}